\journal{Russian Journal of Nonlinear Dynamics}
\newtheorem{theorem}{Theorem}[section]
\newtheorem{lemma}[theorem]{Lemma}
\theoremstyle{definition}
\theoremstyle{remark}
\begin{document}

\begin{frontmatter}


\title{Remarks on forced oscillations in some systems with gyroscopic forces}



\author[label5]{Ivan Polekhin}
\address[label5]{Steklov Mathematical Institute of the Russian Academy of Sciences, Moscow, Russia}


\begin{abstract}
In the paper we study the existence of a forced oscillation in two Lagrange systems with gyroscopic forces: a spherical pendulum in a magnetic field and a point on a rotating closed convex surface. We show how it is possible to prove the existence of forced oscillations in these systems provided the systems move in the presence of viscous friction.
\end{abstract}

\begin{keyword}
forced oscillation \sep spherical pendulum \sep gyroscopic force \sep friction \sep Wa{\.z}ewski method


\end{keyword}

\end{frontmatter}



\section{Introduction}
In \cite{bolotin2015calculus} the following result concerning the existence of forced oscillations in Lagrange systems has been proved. Let us have a system with Lagrangian $L$:
\begin{align*}
    L = L_2(q, \dot q, t) + L_1(q, \dot q, t) + L_0(q,t)
\end{align*}
defined on a smooth manifold $M$. Here, as usual, $L_2$ is a positively defined quadratic form in $\dot q$, $L_1$ is defined by a $1$-form, $L_0$ does not depend on $\dot q$. Let us have a submanifold with a smooth boundary $N \subset M$ defined by a smooth function $f$ and inequality $f \geqslant 0$, i.e. boundary $\partial N$ is defined by $f = 0$. We suppose that $L$ is $\tau$-periodic in $t$.

We will say that $N$ is dynamically convex (here we use the terminology from \cite{bolotin2015calculus}) if for any solution $q(t)$ tangent to the boundary $\partial N$ (in other words, $q(0) \in \partial N$ and $\dot q(0) \in T(\partial N)$) we have $f(q(t))<0$ for all $t \in (0, \varepsilon)$ for some $\varepsilon > 0$. This property plays the major role in all considerations below.

If $N$ is dynamically convex, then in each homotopy class of free closed loops in $N$, there exists the trajectory of a $\tau$-periodic solution of the Lagrange equations with Lagrangian $L$.

In particular, from this result it is possible to prove that for an inverted spherical pendulum with a horizontally $\tau$-periodically moving pivot point there always exists a $\tau$-periodic solution along which the rod of pendulum is always above the horizontal plane.

The result is formally applicable to systems with gyroscopic forces. These forces correspond to the linear in $\dot q$ in the Lagrangian function. Gyroscopic forces naturally appear in various mechanical system. For instance, they appear after Routh reduction in systems with symmetry, in the presence of magnetic forces (the Lorentz force), or due to the use of a rotating reference frame.

As we will show below, in practice, the presence of gyroscopic forces may cause difficulties to the application of the result \cite{bolotin2015calculus}. The main difficulty is that, in the presence of gyroscopic forces, $N$ can be dynamically convex only in special cases. To illustrate this, let us consider an inverted spherical pendulum in a magnetic field and in a horizontal $\tau$-periodic force field.

The equation of motion has the form
\begin{align}
\label{e2}
    m\ddot \rho = F + R + [\dot \rho, B] - mg \cdot e_z,
\end{align}
where $m$ is the mass of the pendulum, $F$ is the horizontal vector of external forces, $R$ is the force of constraint, $B$ is the magnetic field, $g$ is the gravity acceleration. Here $e_x$, $e_y$ and $e_z$ are, as usual, orthogonal unit vectors. $F = F_x(t) e_x + F_y(t) e_y$, $F_x(t+\tau) = F_x(t)$ and $F_y(t+\tau) = F_y(t)$.

If $B \equiv 0$, then
$$
N = \{ \rho \colon (\rho, e_z) \geqslant 0 \}
$$
is a dynamically convex region. Indeed, for any solution $\rho(t) = (\rho_x(t), \rho_y(t), \rho_z(t))$ such that $\rho_z(0) = 0$ and $(\dot\rho(0), e_z) = 0$ we have
$$
(m\ddot\rho, e_z) = -mg < 0.
$$
Therefore, from the Taylor expansion we immediately obtain that $N$ is dynamically convex (Fig. 1).

If now $B$ is a non-zero constant vector, we have
$$
(m\ddot\rho(0), e_z) = -mg + ([\dot\rho(0), B], e_z).
$$
Therefore, we see that for large $\dot\rho(0)$ right hand side can be both positive and negative and $N$ is not dynamically convex (in a general case). To be more precise, if $|\dot \rho(0)|$ is large and $(m\ddot\rho(0), e_z) < 0$, then we can change the direction of the velocity and the considered value becomes positive. The only case when $N$ is still dynamically convex is when $B$ and $e_z$ are parallel and, therefore, $([\dot\rho(0), B], e_z) = 0$ (Fig. 2).

Below we will show that it is possible to prove the existence of a periodic solution for equation (\ref{e2}) provided the pendulum is moving with viscous friction. Note that the original result in \cite{bolotin2015calculus} cannot be applied to systems with friction.

The results are based on the ideas of the Wa{\.z}ewski topological method \cite{wazewski1947principe,reissig1963qualitative} and on more general results on the existence of forced oscillations \cite{srzednicki1994periodic,srzednicki2005fixed}. The paper can be considered as a continuation of the previous results \cite{polekhin2014examples,polekhin2014periodic,polekhin2015forced,polekhin2016forced,polekhin2018topological,srzednicki2019periodic}.

\begin{figure}[!h]
\centering
\begin{minipage}[t]{180px}
  \centering
  \def\svgwidth{180px}\footnotesize
\begingroup%
  \makeatletter%
  \providecommand\color[2][]{%
    \errmessage{(Inkscape) Color is used for the text in Inkscape, but the package 'color.sty' is not loaded}%
    \renewcommand\color[2][]{}%
  }%
  \providecommand\transparent[1]{%
    \errmessage{(Inkscape) Transparency is used (non-zero) for the text in Inkscape, but the package 'transparent.sty' is not loaded}%
    \renewcommand\transparent[1]{}%
  }%
  \providecommand\rotatebox[2]{#2}%
  \newcommand*\fsize{\dimexpr\f@size pt\relax}%
  \newcommand*\lineheight[1]{\fontsize{\fsize}{#1\fsize}\selectfont}%
  \ifx\svgwidth\undefined%
    \setlength{\unitlength}{302.10685959bp}%
    \ifx\svgscale\undefined%
      \relax%
    \else%
      \setlength{\unitlength}{\unitlength * \real{\svgscale}}%
    \fi%
  \else%
    \setlength{\unitlength}{\svgwidth}%
  \fi%
  \global\let\svgwidth\undefined%
  \global\let\svgscale\undefined%
  \makeatother%
  \begin{picture}(1,0.75308518)%
    \lineheight{1}%
    \setlength\tabcolsep{0pt}%
    \put(0,0){\includegraphics[width=\unitlength,page=1]{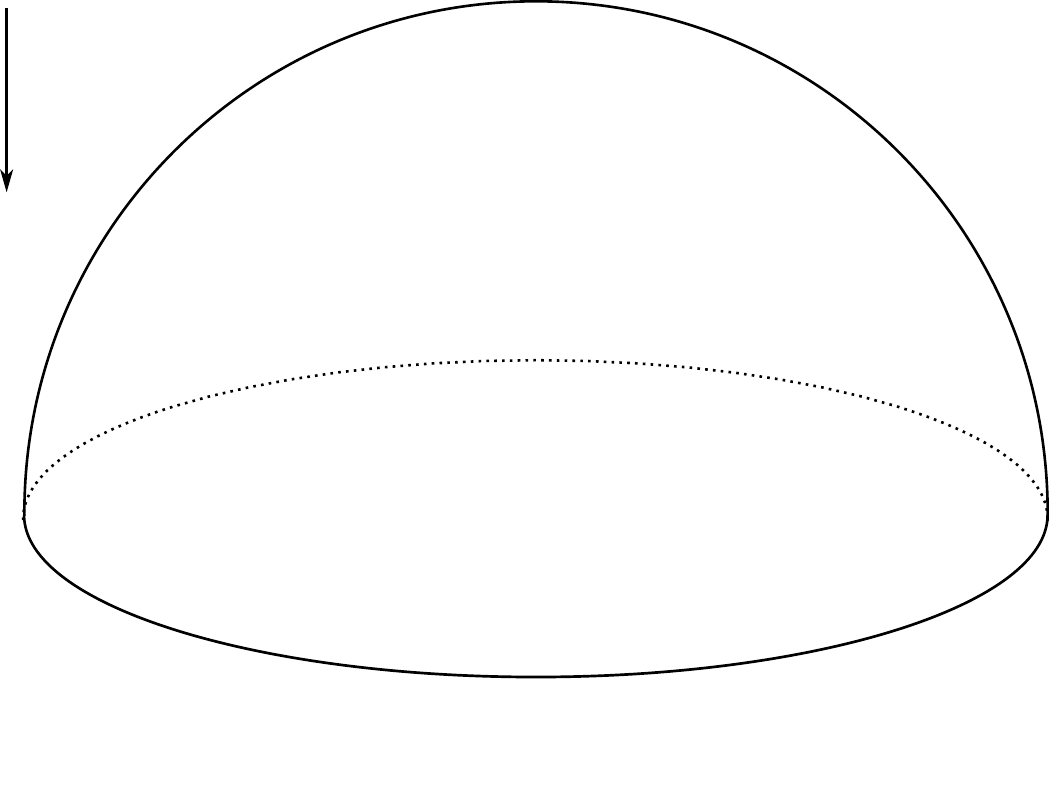}}%
    \put(0.01633465,0.64239797){\color[rgb]{0,0,0}\makebox(0,0)[lt]{\lineheight{1.25}\smash{\begin{tabular}[t]{l}$mg$\end{tabular}}}}%
    \put(0.72227175,0.42808407){\color[rgb]{0,0,0}\makebox(0,0)[lt]{\lineheight{1.25}\smash{\begin{tabular}[t]{l}$T=c$\end{tabular}}}}%
    \put(0,0){\includegraphics[width=\unitlength,page=2]{drawing-1.pdf}}%
    \put(0.69354227,0.05163427){\color[rgb]{0,0,0}\makebox(0,0)[lt]{\lineheight{1.25}\smash{\begin{tabular}[t]{l}$\dot\rho$\end{tabular}}}}%
  \end{picture}%
\endgroup%

    \caption{For $B\equiv0$ the region is dynamically convex: tangent solutions locally leaves the region. In the presence of friction, Solutions cannot leave the subset where $T \leqslant c$ for some $c$.}
    \label{pic3}
\end{minipage}
\hspace{0.1cm}
\begin{minipage}[t]{180px}
  \centering
  \def\svgwidth{180px}\footnotesize
\begingroup%
  \makeatletter%
  \providecommand\color[2][]{%
    \errmessage{(Inkscape) Color is used for the text in Inkscape, but the package 'color.sty' is not loaded}%
    \renewcommand\color[2][]{}%
  }%
  \providecommand\transparent[1]{%
    \errmessage{(Inkscape) Transparency is used (non-zero) for the text in Inkscape, but the package 'transparent.sty' is not loaded}%
    \renewcommand\transparent[1]{}%
  }%
  \providecommand\rotatebox[2]{#2}%
  \newcommand*\fsize{\dimexpr\f@size pt\relax}%
  \newcommand*\lineheight[1]{\fontsize{\fsize}{#1\fsize}\selectfont}%
  \ifx\svgwidth\undefined%
    \setlength{\unitlength}{302.10685959bp}%
    \ifx\svgscale\undefined%
      \relax%
    \else%
      \setlength{\unitlength}{\unitlength * \real{\svgscale}}%
    \fi%
  \else%
    \setlength{\unitlength}{\svgwidth}%
  \fi%
  \global\let\svgwidth\undefined%
  \global\let\svgscale\undefined%
  \makeatother%
  \begin{picture}(1,0.76165325)%
    \lineheight{1}%
    \setlength\tabcolsep{0pt}%
    \put(0,0){\includegraphics[width=\unitlength,page=1]{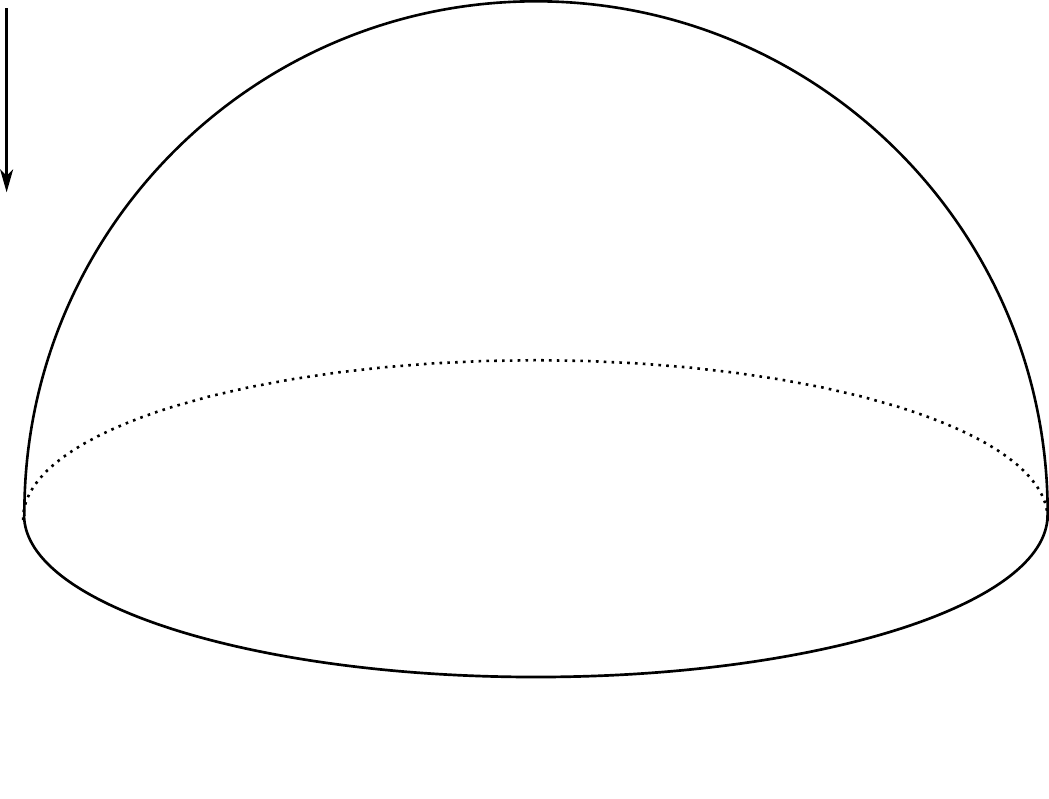}}%
    \put(0.01633465,0.65096603){\color[rgb]{0,0,0}\makebox(0,0)[lt]{\lineheight{1.25}\smash{\begin{tabular}[t]{l}$mg$\end{tabular}}}}%
    \put(0.72227175,0.43665214){\color[rgb]{0,0,0}\makebox(0,0)[lt]{\lineheight{1.25}\smash{\begin{tabular}[t]{l}$T=c$\end{tabular}}}}%
    \put(0,0){\includegraphics[width=\unitlength,page=2]{drawing-2.pdf}}%
    \put(0.69354227,0.06020233){\color[rgb]{0,0,0}\makebox(0,0)[lt]{\lineheight{1.25}\smash{\begin{tabular}[t]{l}$\dot\rho$\end{tabular}}}}%
    \put(0,0){\includegraphics[width=\unitlength,page=3]{drawing-2.pdf}}%
    \put(0.85257486,0.63842717){\color[rgb]{0,0,0}\makebox(0,0)[lt]{\lineheight{1.25}\smash{\begin{tabular}[t]{l}$B$\end{tabular}}}}%
    \put(0,0){\includegraphics[width=\unitlength,page=4]{drawing-2.pdf}}%
    \put(0.31784784,0.05819707){\color[rgb]{0,0,0}\makebox(0,0)[lt]{\lineheight{1.25}\smash{\begin{tabular}[t]{l}$\dot\rho$\end{tabular}}}}%
  \end{picture}%
\endgroup%

    \caption{For $B \ne 0$ the region can be non-convex.}
    \label{pic4}
\end{minipage}
\end{figure}

\section{Spherical pendulum in a magnetic field}

The equation of motion of a spherical pendulum with viscous friction in a magnetic field has the form
\begin{align}
\label{ee2}
    m\ddot \rho = -\mu \dot \rho + F + R + [\dot \rho, B] - mg \cdot e_z,
\end{align}
where, $\mu > 0$ is the friction coefficient. Again, $F$ and $B$ are $\tau$-periodic functions. Here and below we use this form of equations and do not use Lagrange equations. However, it is always possible to rewrite the above equation in local coordinates.

Let
$$
T = \frac{m}{2}(\dot\rho, \dot\rho)
$$
be the kinetic energy of the system. The phase space of our system is $T\mathbb{S}^2$. Let us consider a submanifold $K$ (with boundary) of $T\mathbb{S}^2$ defined by the following equation: $T \leqslant c$. In other words, for all points of $\mathbb{S}^2$ we consider corresponding tangent spaces and in each tangent space we consider only relatively small velocities.

If $c > 0$ is large enough, then for any solution starting at $\partial K$ we have
\begin{align}
\label{ee3}
    \frac{d}{dt} T < 0.
\end{align}
Indeed,
$$
\frac{d}{dt} T = (m\ddot\rho, \dot\rho) = -\mu(\dot\rho, \dot\rho) + (F + R + [\dot\rho, B], \dot\rho) = -\mu(\dot\rho, \dot\rho) + (F,\dot\rho)< 0,
$$
provided $c$ is large.

Below we will use without a proof a result from \cite{srzednicki1994periodic} and some definitions will be needed.

Let us have a closed set $N \subset \mathbb{R} \times T \mathbb{S}^2$ (extended phase space) satisfying the following properties. The boundary of $N$ is smooth. $N$ is $\tau$-periodic in $t$, i.e. the set coincides with itself after the translation $t \mapsto t + \tau$. The point $(t_0, \rho_0, \dot \rho_0) \in \partial N$ is an egress point of $N$ w.r.t. system (\ref{ee2}) if for some $\varepsilon$ we have $(t,\rho(t),\dot\rho(t)) \in N \setminus \partial N$ for all $t \in (-\varepsilon + t_0, t_0)$, where $\rho(t)$ is a solution of (\ref{ee2}) such that $\rho(t_0) = \rho_0$ and $\dot\rho(t_0) = \dot\rho_0$. Similarly, we say that $(t_0, \rho_0, \dot \rho_0) \in \partial N$ is a strictly egress point if for some $\varepsilon > 0$ we have $(t,\rho(t),\dot\rho(t)) \not\in N$ for all $t \in (t_0, t_0 + \varepsilon)$.

Now we can prove the following result
\begin{theorem}
Let us consider the following equation of motion of a spherical pendulum in a magnetic field in the presence of viscous friction and an additional horizontal force
\begin{align}
\label{e4}
    m\ddot \rho = -\mu \dot \rho + F_x(t) e_x + F_y(t) e_y + R + [\dot \rho, B(t)] - mg \cdot e_z,
\end{align}
where $F_x$, $F_y$ and $B$ are $\tau$-periodic functions of time. Let $c > 0$ be a constant such that for any $\dot\rho \,\bot\, e_z$ and $m\dot\rho^2/2 \leqslant c$ we have for all $t$
\begin{align}
\label{eq5}
    ([\dot \rho, B(t)], e_z) < mg.
\end{align}
Then for any
\begin{align}
\label{e6}
    \mu > \max\limits_{t \in [0,\tau]} \sqrt{\frac{m}{2c}}(|F| + mg)
\end{align}
there exists a $\tau$-periodic solution $\rho(t)$ such that $(\rho(t),e_z) > 0$ for all $t$.
\end{theorem}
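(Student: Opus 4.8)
The plan is to apply the Wa{\.z}ewski--Srzednicki topological method to the set
\[
N = \{(t,\rho,\dot\rho) \in \mathbb{R} \times T\mathbb{S}^2 \colon (\rho, e_z) \geqslant 0,\ \tfrac{m}{2}(\dot\rho,\dot\rho) \leqslant c\},
\]
that is, the intersection of the ``upper hemisphere'' region with the slab $K$ considered above; since $N$ does not depend on $t$ it is trivially $\tau$-periodic. Its section $W$ (by a hyperplane $\{t=\mathrm{const}\}$) is compact, and topologically it is a $2$-disk bundle over the contractible upper hemisphere, hence contractible, so $\chi(W)=1$. The boundary $\partial N$ has two faces, $\{T=c\}$ and $\{(\rho,e_z)=0\}$, meeting along a corner; I would analyze the flow of (\ref{e4}) on each face and then observe that the corner introduces nothing new.

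On the face $\{T=c\}$, exactly as in the derivation of (\ref{ee3}) but keeping the gravity term (the reaction $R=\lambda\rho$ and the Lorentz term $[\dot\rho,B]$ are orthogonal to $\dot\rho$ and drop out),
\[
\frac{d}{dt}T = -\mu(\dot\rho,\dot\rho) + (F,\dot\rho) - mg\,(\dot\rho,e_z) \leqslant -\mu\,\tfrac{2c}{m} + (|F|+mg)\sqrt{\tfrac{2c}{m}},
\]
whose right-hand side is negative for all $t$ under hypothesis (\ref{e6}); so solutions cross this face strictly into $\{T<c\}$ and none remains on it. On the face $\{(\rho,e_z)=0\}$, take a solution with $(\rho(t_0),e_z)=0$. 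If $(\dot\rho(t_0),e_z)<0$ it leaves $N$ at once. If $(\dot\rho(t_0),e_z)=0$, then at such a point $F$, the friction $-\mu\dot\rho$ and $R=\lambda\rho$ all have vanishing $e_z$-component, so $m(\ddot\rho(t_0),e_z) = ([\dot\rho(t_0),B(t_0)],e_z) - mg < 0$ by (\ref{eq5}) (here $\dot\rho(t_0)\,\bot\,e_z$ and $T(t_0)\leqslant c$), and the solution again leaves $N$. If $(\dot\rho(t_0),e_z)>0$ it enters $N$. Hence the egress set of $N$ equals its strictly egress set, it is closed, and its section is
\[
W^- = \{(\rho,\dot\rho)\in T\mathbb{S}^2 \colon (\rho,e_z)=0,\ (\dot\rho,e_z)\leqslant 0,\ \tfrac{m}{2}(\dot\rho,\dot\rho)\leqslant c\};
\]
the corner $\{T=c\}\cap\{(\rho,e_z)=0\}$ is consistent with this. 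Over each point of the equator the fibre $\{(\dot\rho,e_z)\leqslant 0,\ \tfrac{m}{2}(\dot\rho,\dot\rho)\leqslant c\}$ is convex and contains $\dot\rho=0$, so $W^-$ deformation retracts onto the equatorial circle and $\chi(W^-)=0$.

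Thus $N$ is a $\tau$-periodic block for (\ref{e4}), and $\chi(W)=1\neq 0=\chi(W^-)$, so the block theorem of Srzednicki \cite{srzednicki1994periodic} (in the Euler-characteristic form, see also \cite{srzednicki2005fixed}) yields a $\tau$-periodic solution $\rho(t)$ whose trajectory stays in $N$; in particular $(\rho(t),e_z)\geqslant 0$ and $\tfrac{m}{2}(\dot\rho(t),\dot\rho(t))\leqslant c$ for all $t$. To obtain the strict inequality, suppose $(\rho(t_*),e_z)=0$ for some $t_*$; then $t_*$ is a minimum of the nonnegative $\tau$-periodic function $t\mapsto(\rho(t),e_z)$, so $(\dot\rho(t_*),e_z)=0$, and the computation above gives $m(\ddot\rho(t_*),e_z)=([\dot\rho(t_*),B(t_*)],e_z)-mg<0$ by (\ref{eq5}), contradicting minimality. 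Therefore $(\rho(t),e_z)>0$ for all $t$.

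The step I expect to be the main obstacle is checking that $N$ honestly satisfies the hypotheses of the result in \cite{srzednicki1994periodic}: pinning down the exit set at the corner of $\partial N$, verifying that the egress and strictly egress sets coincide (so the block is ``regular''), and dealing with the non-smoothness of $\partial N$, either by appealing to the corner-tolerant versions in \cite{srzednicki2005fixed} or by an explicit corner smoothing that preserves the strict sign of $\frac{d}{dt}T$ on one face and of $(\ddot\rho,e_z)$ on the other. The two analytic inputs are short: the energy estimate on $\{T=c\}$, where hypothesis (\ref{e6}) enters, and the sign of $(\ddot\rho,e_z)$ on the equator, where (\ref{eq5}) enters. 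Once the block structure is in place, the count $\chi(W)-\chi(W^-)=1\neq 0$ is immediate and the forced oscillation follows.
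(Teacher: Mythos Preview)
Your proposal is correct and follows essentially the same route as the paper: the same block $N_c=\{(\rho,e_z)\geqslant 0,\ T\leqslant c\}$, the same identification of the strictly egress set as $\{(\rho,e_z)=0,\ (\dot\rho,e_z)\leqslant 0,\ T\leqslant c\}$ via (\ref{eq5}) on the equatorial face and (\ref{e6}) on the energy face, and the same Euler-characteristic count $\chi=1-0$ fed into Srzednicki's theorem. Your write-up is in fact more careful than the paper's on two points---you keep the gravity term in the energy estimate (which is what makes (\ref{e6}) the right threshold) and you give an independent argument for the strict inequality $(\rho(t),e_z)>0$---but the overall architecture is the same.
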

\begin{proof}
First, let us mention that any solution of the considered equation can be continued for all $t$. This follows from inequality (\ref{ee3}): the configuration space is compact, the forces are periodic and the velocities are bounded.

Let us now consider the following subset $N_c$ of the extended phase space:
\begin{align}
    N_c = \{ t, \rho, \dot\rho \colon (\rho, e_z) \geqslant 0, (\dot \rho, \dot \rho) \leqslant 2c/m \}.
\end{align}
The boundary of $N_c$ is the following set
\begin{align}
    \partial N_c = \{ t, \rho, \dot\rho \colon (\rho, e_z) = 0, (\dot \rho, \dot \rho) \leqslant 2c/m \} \cup \{ t, \rho, \dot\rho \colon (\rho, e_z) > 0, (\dot \rho, \dot \rho) = 2c/m \}.
\end{align}
From (\ref{eq5}) we have that the set of strictly egress point has the form
$$
N_c^{++} = \partial N_c \cap \{ t, \rho, \dot \rho \colon (\rho, e_z) = 0, \, (\dot\rho, e_z) \leqslant 0 \}.
$$
The set of strictly egress points coincides is closed. $N_c$ is homotopic to a point and $N_c^{++}$ is homotopic to a circle, i.e. $\chi(N_c) - \chi(N_c^{++}) = 1$. Using the result from \cite{srzednicki1994periodic,srzednicki2005fixed}, we obtain that in $N_c \setminus \partial N_c$ there exists a $\tau$-periodic solution. In particular, we have shown that there exists a solution such that the rod of the pendulum is never horizontal for all $t$.
\end{proof}

We do not present the theorem from \cite{srzednicki1994periodic,srzednicki2005fixed} and refer the reader to the original papers. However, we would like to outline the idea of the theorem.

The general idea is to choose an appropriate closed periodic subset $N$ of the extended phase space and to consider the vector field in a vicinity of its boundary. All points of the boundary can be divided into two classes (possibly empty). For the first class, the corresponding trajectory (starting at the point and considered for $t \geqslant t_0$) intersects the boundary transversely and for the second class the trajectory is tangent to the boundary. We suppose that the boundary is smooth. Each of these classes can be also divided into two sets: each transverse trajectory can either leave the set or enter it. Any tangent trajectory is either externally tangent or internally tangent to the boundary.

The key requirement is that there should be no trajectories that are internally tangent to our region, i.e. any egress point is a strictly egress point. In particular, for our system this fact follows from inequality (\ref{eq5}). Then we call all points of the boundary, that correspond to externally tangent trajectories and to transverse trajectories leaving the set, the strictly egress points of our system.

Next step is to analyse the topology of the set of strictly egress points $N^{++}$. To be more precise, if this set is also periodic, then we should calculate the Euler characteristic of this set. If $\chi(N) - \chi(N^{++}) \ne 0$, then there is a $\tau$-periodic solution inside $N$.

As a simple example of these considerations we can consider the following $\tau$-periodic system.
\begin{align*}
    \begin{split}
       & \dot x = v(x, y, t),\\
       & \dot y = w(x, y, t).
    \end{split}
\end{align*}
Suppose that $N$ is the following cylinder in $\mathbb{R}^3$: $N = \{ x, y, t \colon x^2 + y^2 \leqslant 1 \}$ and for all $t$ all solutions are transverse to the boundary and, moreover, all corresponding solutions leave the cylinder (Fig. 3). Then, obviously $N^{++}$ is homotopic to a circle and $\chi(N^{++}) = 0$, $\chi(N) = 1$. Hence, there is a $\tau$-periodic solution inside $N$.

\begin{figure}[!h]
\centering
\begin{minipage}[t]{180px}
  \centering
  \def\svgwidth{180px}\footnotesize
\begingroup%
  \makeatletter%
  \providecommand\color[2][]{%
    \errmessage{(Inkscape) Color is used for the text in Inkscape, but the package 'color.sty' is not loaded}%
    \renewcommand\color[2][]{}%
  }%
  \providecommand\transparent[1]{%
    \errmessage{(Inkscape) Transparency is used (non-zero) for the text in Inkscape, but the package 'transparent.sty' is not loaded}%
    \renewcommand\transparent[1]{}%
  }%
  \providecommand\rotatebox[2]{#2}%
  \newcommand*\fsize{\dimexpr\f@size pt\relax}%
  \newcommand*\lineheight[1]{\fontsize{\fsize}{#1\fsize}\selectfont}%
  \ifx\svgwidth\undefined%
    \setlength{\unitlength}{305.42135827bp}%
    \ifx\svgscale\undefined%
      \relax%
    \else%
      \setlength{\unitlength}{\unitlength * \real{\svgscale}}%
    \fi%
  \else%
    \setlength{\unitlength}{\svgwidth}%
  \fi%
  \global\let\svgwidth\undefined%
  \global\let\svgscale\undefined%
  \makeatother%
  \begin{picture}(1,0.86542365)%
    \lineheight{1}%
    \setlength\tabcolsep{0pt}%
    \put(0,0){\includegraphics[width=\unitlength,page=1]{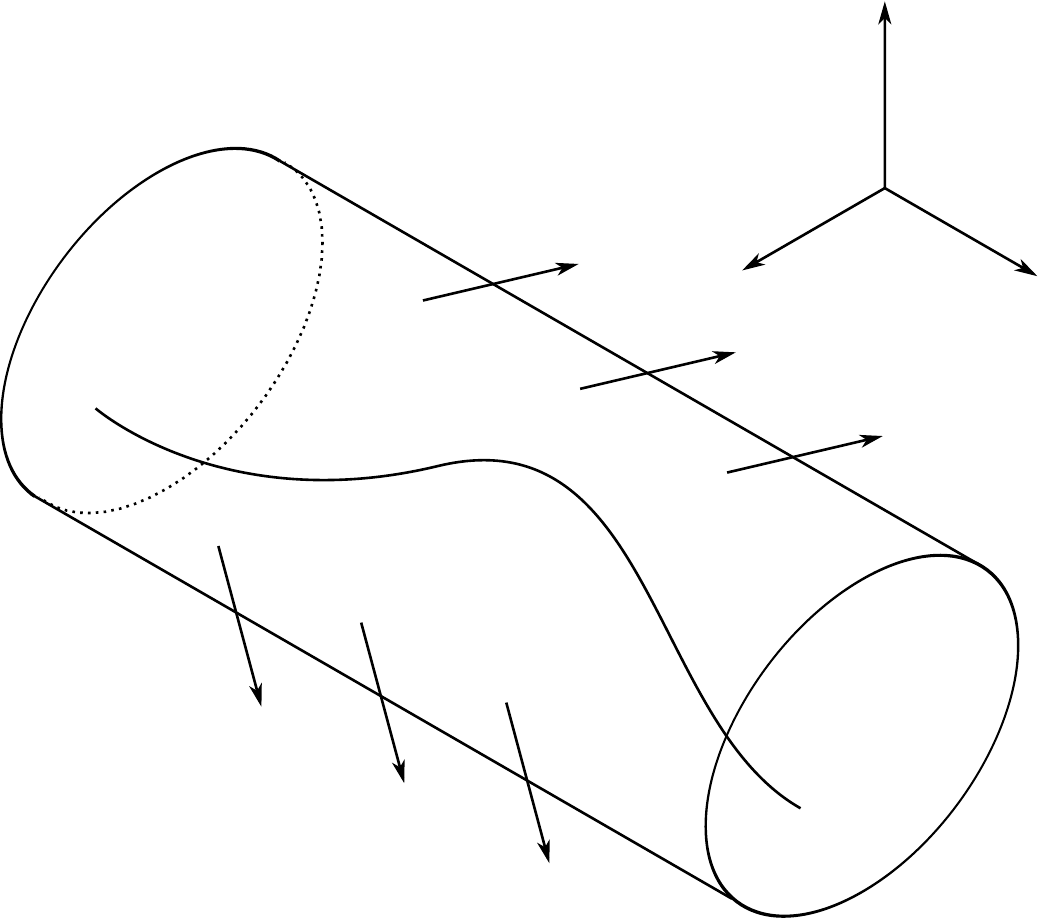}}%
    \put(0.84891865,0.82763006){\color[rgb]{0,0,0}\makebox(0,0)[lt]{\lineheight{1.25}\smash{\begin{tabular}[t]{l}$y$\end{tabular}}}}%
    \put(0.73578466,0.58732976){\color[rgb]{0,0,0}\makebox(0,0)[lt]{\lineheight{1.25}\smash{\begin{tabular}[t]{l}$x$\end{tabular}}}}%
    \put(0.92153514,0.56680777){\color[rgb]{0,0,0}\makebox(0,0)[lt]{\lineheight{1.25}\smash{\begin{tabular}[t]{l}$t$\end{tabular}}}}%
  \end{picture}%
\endgroup%

    \caption{A periodic solution for a planar non-autonomous system. The cylinder $N$ is shown.}
    \label{pic3}
\end{minipage}
\hspace{0.1cm}
\begin{minipage}[t]{180px}
  \centering
  \def\svgwidth{180px}\footnotesize
\begingroup%
  \makeatletter%
  \providecommand\color[2][]{%
    \errmessage{(Inkscape) Color is used for the text in Inkscape, but the package 'color.sty' is not loaded}%
    \renewcommand\color[2][]{}%
  }%
  \providecommand\transparent[1]{%
    \errmessage{(Inkscape) Transparency is used (non-zero) for the text in Inkscape, but the package 'transparent.sty' is not loaded}%
    \renewcommand\transparent[1]{}%
  }%
  \providecommand\rotatebox[2]{#2}%
  \newcommand*\fsize{\dimexpr\f@size pt\relax}%
  \newcommand*\lineheight[1]{\fontsize{\fsize}{#1\fsize}\selectfont}%
  \ifx\svgwidth\undefined%
    \setlength{\unitlength}{305.42134743bp}%
    \ifx\svgscale\undefined%
      \relax%
    \else%
      \setlength{\unitlength}{\unitlength * \real{\svgscale}}%
    \fi%
  \else%
    \setlength{\unitlength}{\svgwidth}%
  \fi%
  \global\let\svgwidth\undefined%
  \global\let\svgscale\undefined%
  \makeatother%
  \begin{picture}(1,0.86542369)%
    \lineheight{1}%
    \setlength\tabcolsep{0pt}%
    \put(0,0){\includegraphics[width=\unitlength,page=1]{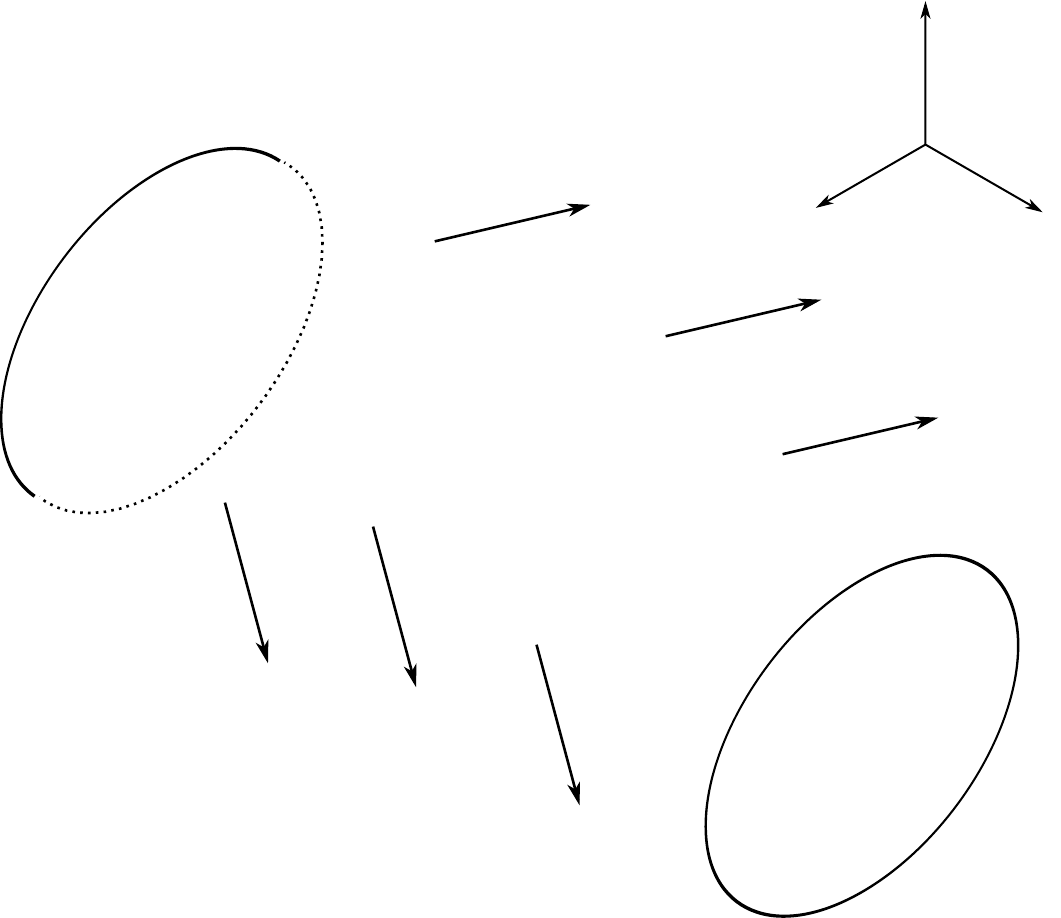}}%
    \put(0.88615049,0.8363752){\color[rgb]{0,0,0}\makebox(0,0)[lt]{\lineheight{1.25}\smash{\begin{tabular}[t]{l}$y$\end{tabular}}}}%
    \put(0.79919457,0.65167824){\color[rgb]{0,0,0}\makebox(0,0)[lt]{\lineheight{1.25}\smash{\begin{tabular}[t]{l}$x$\end{tabular}}}}%
    \put(0.93969121,0.62721877){\color[rgb]{0,0,0}\makebox(0,0)[lt]{\lineheight{1.25}\smash{\begin{tabular}[t]{l}$t$\end{tabular}}}}%
    \put(0,0){\includegraphics[width=\unitlength,page=2]{drawing2.pdf}}%
  \end{picture}%
\endgroup%

    \caption{A periodic solution in a continuously deformed cylinder.}
    \label{pic4}
\end{minipage}
\end{figure}

The following proposition can be proved without involving any auxiliary results.

\begin{theorem}
Suppose that in (\ref{e4}) $F_x$, $F_y$ and $B$ are bounded functions of time and condition (\ref{eq5}) holds. Then for any
\begin{align}
    \mu > \sup\limits_{t \in \mathbb{R}} \sqrt{\frac{m}{2c}}(|F| + mg)
\end{align}
there exists a solution $\rho(t)$ such that $(\rho(t),e_z) > 0$ for all $t$.
\end{theorem}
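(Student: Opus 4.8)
The plan is to keep the compact subset $N_c=\{(\rho,e_z)\geqslant 0,\ (\dot\rho,\dot\rho)\leqslant 2c/m\}\subset T\mathbb{S}^2$ used in the proof of the previous theorem, to produce for every integer $n\geqslant 1$ a solution of (\ref{e4}) that stays in $N_c$ on a long time interval by an elementary Wa{\.z}ewski-type retraction argument, and then to extract from these solutions one that is defined for all $t\in\mathbb{R}$ by a compactness argument; only this last step differs from the proof of the previous theorem, where the $\tau$-periodicity was essential. The boundary behaviour of $N_c$ is analysed exactly as before, now for the merely bounded forces: the assumed lower bound on $\mu$ gives $\frac{d}{dt}T=-\mu(\dot\rho,\dot\rho)+(F,\dot\rho)-mg(\dot\rho,e_z)<0$ on the face $\{(\dot\rho,\dot\rho)=2c/m\}$, so that face is crossed only inward; and since the constraint reaction and the horizontal force are orthogonal to $e_z$, at any point with $(\rho,e_z)=0$, $(\dot\rho,e_z)=0$ and $T\leqslant c$ one has $m(\ddot\rho,e_z)=([\dot\rho,B],e_z)-mg<0$ by (\ref{eq5}). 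Consequently every boundary point through which a trajectory can leave $N_c$ is a \emph{strict} egress point, and the egress set is the closed set $N_c^{++}=\{(\rho,e_z)=0,\ (\dot\rho,e_z)\leqslant 0,\ T\leqslant c\}$. Moreover $N_c$ is contractible (it deformation retracts onto the closed hemisphere $\{(\rho,e_z)\geqslant 0\}$ by $\dot\rho\mapsto 0$), while the same map sends $N_c^{++}$ onto the equator, so $N_c^{++}$ is homotopy equivalent to a circle and is not a retract of $N_c$.

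Using this, for each $n$ I would show that there exists $z_n\in N_c$ such that the solution of (\ref{e4}) with $(\rho(-n),\dot\rho(-n))=z_n$ remains in $N_c$ for all $t\geqslant -n$. Indeed, if this failed, then the forward trajectory of every $z\in N_c$ started at time $-n$ would eventually leave $N_c$, necessarily at a point of $N_c^{++}$, at some finite time; since all egress points are strict, this first-exit time depends continuously on $z$ (this is the standard lemma underlying the Wa{\.z}ewski method, and (\ref{eq5}) together with the lower bound on $\mu$ is precisely what excludes internally tangent trajectories), and sending each point to the position–velocity pair at its exit instant would give a continuous retraction $N_c\to N_c^{++}$, which is impossible. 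The same boundary analysis shows that the resulting solution $\rho_n$ satisfies $(\rho_n(t),e_z)>0$ for all $t>-n$.

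It remains to pass to the limit. Since $z_n=(\rho_n(0),\dot\rho_n(0))\in N_c$ and $N_c$ is compact, some subsequence converges to a point $z_\ast\in N_c$; global existence of solutions (guaranteed, as in the previous theorem, by $\frac{d}{dt}T<0$ for large $T$) together with continuous dependence on initial conditions yields a solution $\rho_\ast$, defined for all $t\in\mathbb{R}$, that is the uniform-on-compacta limit of the corresponding subsequence of $(\rho_n)$. Since each $\rho_n$ stays in $N_c$ on $[-n,\infty)$, the limit stays in $N_c$ for all $t$, whence $(\rho_\ast(t),e_z)\geqslant 0$ everywhere; and if $(\rho_\ast(t^\ast),e_z)=0$ for some $t^\ast$, then $t^\ast$ is a minimum of $t\mapsto(\rho_\ast(t),e_z)$, so $(\dot\rho_\ast(t^\ast),e_z)=0$, and then (\ref{eq5}) gives $(\ddot\rho_\ast(t^\ast),e_z)<0$, which would make $t^\ast$ a strict local maximum — a contradiction. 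Hence $(\rho_\ast(t),e_z)>0$ for all $t$, which is the required solution. The only genuinely delicate point of this scheme is the continuity of the first-exit time in the retraction step, i.e.\ the absence of internally tangent trajectories, for which hypothesis (\ref{eq5}) and the lower bound on $\mu$ are used in an essential way; global existence, compactness of $N_c$, continuous dependence, and the non-retractibility of a contractible set onto a circle are all routine.
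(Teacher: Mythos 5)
Your proof is correct and, on the Wa{\.z}ewski side, follows the same scheme as the paper: the same set $N_c$, the same computation of $dT/dt$ and of $(\ddot\rho,e_z)$ at grazing points, the same egress set $N_c^{++}$, and the same retraction obstruction. (Your version of the $dT/dt$ computation is in fact the more careful one: the term $-mg(\dot\rho,e_z)$ that you keep is silently dropped in the paper's displayed chain of equalities, although the hypothesis (\ref{e6}) already accounts for it, so this is only a typo.) The only stylistic difference in this step is that you build a retraction of $N_c$ itself onto $N_c^{++}$ and appeal to $N_c\simeq \mathrm{pt}$, $N_c^{++}\simeq S^1$, whereas the paper fixes a two-dimensional disk $D\subset\{t=0\}$ with $\partial D\subset N_c^{++}$ and $D\cap\partial N_c=\partial D$ and then contradicts the non-retractibility of a disk onto its boundary; the two versions are equivalent once one knows $N_c^{++}$ deformation retracts onto such a $\partial D$.

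The genuine departure from the paper is your closing compactness step. The paper's retraction argument, run from time $0$, only yields a solution $\rho$ with $(\rho(t),e_z)>0$ for $t\geqslant 0$. You instead run the same retraction from time $-n$ for every $n$, obtaining solutions confined to $N_c$ on $[-n,\infty)$, and then use compactness of $N_c$ together with global existence and continuous dependence to extract a locally uniform limit that lies in $N_c$ for all $t\in\mathbb{R}$. Your final strictness argument (if $(\rho_\ast(t^\ast),e_z)=0$ then $t^\ast$ is a minimum, so $(\dot\rho_\ast(t^\ast),e_z)=0$, and then (\ref{eq5}) forces $(\ddot\rho_\ast(t^\ast),e_z)<0$, contradicting minimality) is also sound and in fact supplies a detail that the paper leaves implicit. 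So you prove the stronger reading of ``for all $t$'' (all real $t$ rather than $t\geqslant 0$), at the cost of the extra diagonal/compactness step; what this buys you is a solution defined on the whole line even though the forcing is merely bounded rather than periodic, which the paper's one-shot retraction from $t=0$ does not give.
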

\begin{proof}
The idea is of the proof stems from the Wa{\.z}ewski topological method \cite{wazewski1947principe,reissig1963qualitative} that can be explained as follows. Let us consider a closed subset $D$ of $N_c$ such that
\begin{enumerate}
    \item $D$ is homeomorphic to a disk.
    \item $\partial D \subset N_c^{++}$ and $D \cap \partial N_c = \partial D$.
    \item $D \subset \{ t, \rho, \dot \rho \colon t = 0 \}$.
\end{enumerate}
Let us now show that there exists a solution $\rho(t)$ starting at $D$ such that $(\rho(t),e_z) > 0$. Suppose the contrary, i.e. for any point $t = 0$, $\rho_0$, $\dot\rho_0$ the corresponding solution $\rho(t)$ ($\rho(0) = \rho_0$ and $\dot\rho(0)=\dot\rho_0$) leaves $N_c$ at time $t'$. In other words, point $(t', \rho(t'), \dot\rho(t')) \in N_c^{++}$. Since all egress points are strictly egress points, then the corresponding map from $D$ to $\partial N_c$ is continuous: for any point $t = 0$, $\rho'_0$, $\dot\rho'_0$ close to $t = 0$, $\rho_0$, $\dot\rho_0$, the corresponding egress point  $(t'', \rho'(t''), \dot\rho'(t'')) \in N_c^{++}$ is close to $(t', \rho(t'), \dot\rho(t'))$. Therefore, one can construct a retraction of the disk onto its boundary. The contradiction proves the theorem.
\end{proof}

\section{Rotating closed surface}
Let us consider a closed strictly convex surface in $\mathbb{R}^3$. Suppose that this surface is rotating around a fixed point w.r.t. a prescribed law of motion. Let there be a mass point moving with viscous friction on this surface. In this section we show that if the rotation is slow and smooth enough and the friction is strong enough, then there exist a solution along which the mass point always remains `in the top half' of the rotating surface. More precise definitions and results are given below.

The result is trivial if the surface is motionless: there exists the unstable equilibrium at the top of the surface. We show that if we rotate periodically our surface in the presence of friction, then there also exists a periodic solution

Let $Oxyz$ be the fixed frame, the force of gravity is directed along $z$-axis: $F_{gravity} = -mg \cdot e_z$, where $g$ is the gravity acceleration and $m$ is the mass of the point. Let $O\xi\eta\zeta$ be the rotating frame and the surface can be described in these coordinates by the following equation:
\begin{align}
\label{e9}
    s(\xi, \eta, \zeta) = 0.
\end{align}
Where $s$ is a smooth function. We suppose that this level set is closed and strictly convex. We also suppose that the point $O$ (the centre of rotation) is inside the region bounded by the surface.

\begin{figure}[!h]
\centering
\begin{minipage}[t]{180px}
  \centering
  \def\svgwidth{180px}\footnotesize
  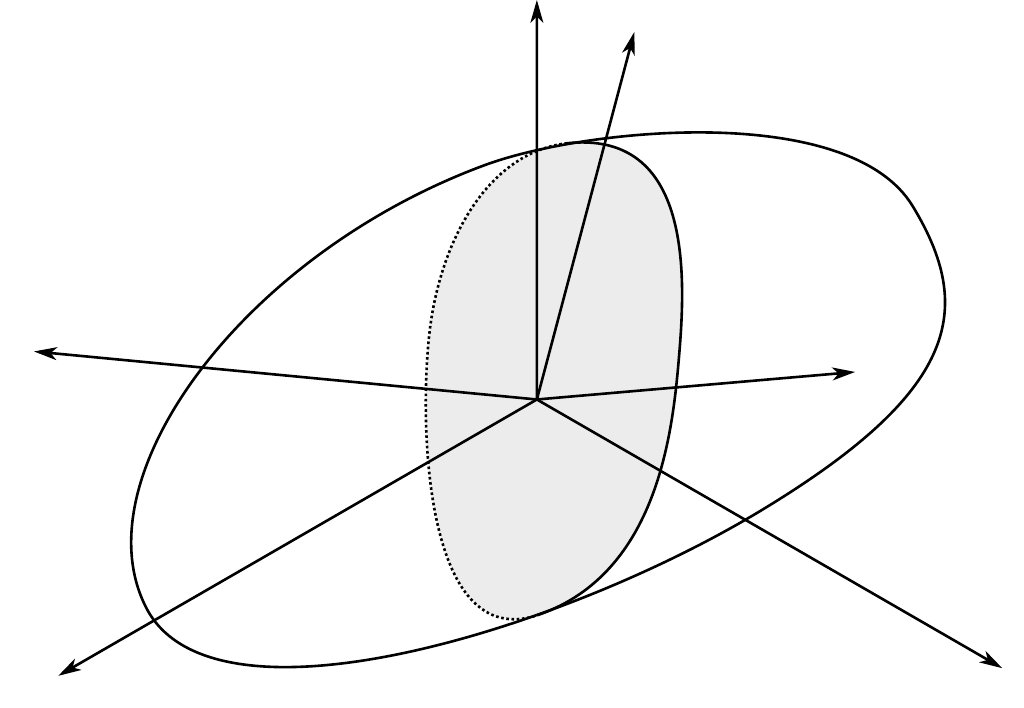
    \caption{Schematic presentation of the rotating surface in a gravitational field.}
    \label{pic3}
\end{minipage}
\hspace{0.1cm}
\begin{minipage}[t]{180px}
  \centering
  \def\svgwidth{180px}\footnotesize
\begingroup%
  \makeatletter%
  \providecommand\color[2][]{%
    \errmessage{(Inkscape) Color is used for the text in Inkscape, but the package 'color.sty' is not loaded}%
    \renewcommand\color[2][]{}%
  }%
  \providecommand\transparent[1]{%
    \errmessage{(Inkscape) Transparency is used (non-zero) for the text in Inkscape, but the package 'transparent.sty' is not loaded}%
    \renewcommand\transparent[1]{}%
  }%
  \providecommand\rotatebox[2]{#2}%
  \newcommand*\fsize{\dimexpr\f@size pt\relax}%
  \newcommand*\lineheight[1]{\fontsize{\fsize}{#1\fsize}\selectfont}%
  \ifx\svgwidth\undefined%
    \setlength{\unitlength}{295.00557486bp}%
    \ifx\svgscale\undefined%
      \relax%
    \else%
      \setlength{\unitlength}{\unitlength * \real{\svgscale}}%
    \fi%
  \else%
    \setlength{\unitlength}{\svgwidth}%
  \fi%
  \global\let\svgwidth\undefined%
  \global\let\svgscale\undefined%
  \makeatother%
  \begin{picture}(1,0.6901277)%
    \lineheight{1}%
    \setlength\tabcolsep{0pt}%
    \put(0,0){\includegraphics[width=\unitlength,page=1]{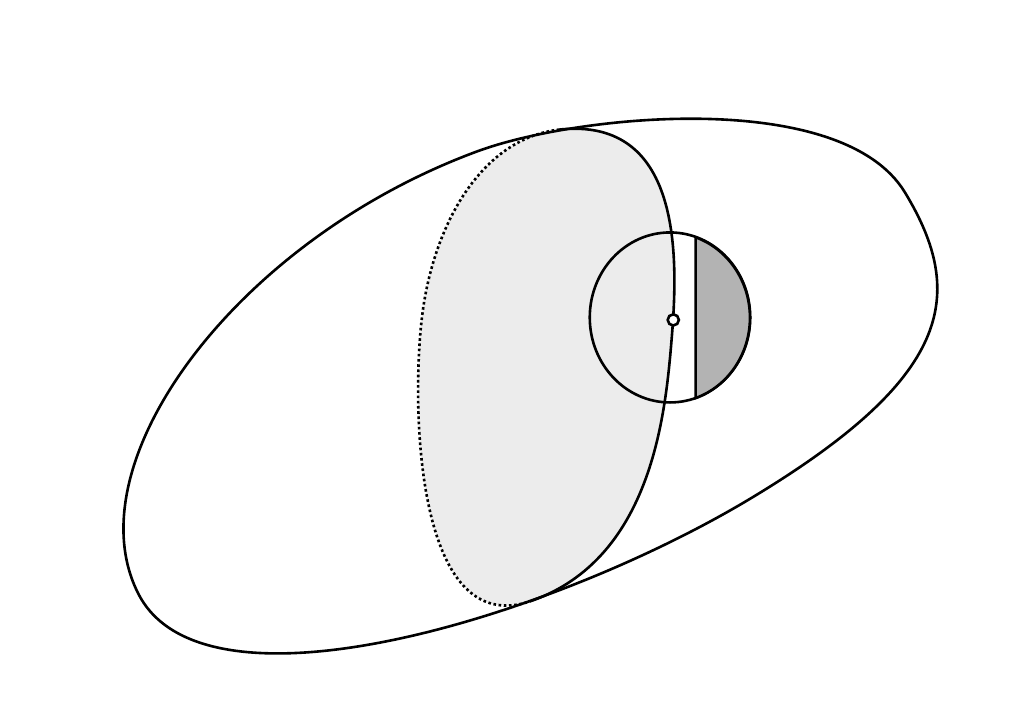}}%
    \put(0.48086666,0.22735764){\color[rgb]{0,0,0}\makebox(0,0)[lt]{\lineheight{1.25}\smash{\begin{tabular}[t]{l}$T = c$\end{tabular}}}}%
    \put(0,0){\includegraphics[width=\unitlength,page=2]{surf2.pdf}}%
    \put(0.72886348,0.49054091){\color[rgb]{0,0,0}\makebox(0,0)[lt]{\lineheight{1.25}\smash{\begin{tabular}[t]{l}$T \leqslant c \cap \dot f \leqslant 0$\end{tabular}}}}%
    \put(0,0){\includegraphics[width=\unitlength,page=3]{surf2.pdf}}%
  \end{picture}%
\endgroup%

    \caption{For a given $\rho$, the set of strictly egress points in the tangent space is closed (highlighted). We suppose that $T \leqslant c$.}
    \label{pic4}
\end{minipage}
\end{figure}

The equation of motion in the rotating frame has the following form:
\begin{align}
\label{e10}
    m\ddot \rho = -mg\cdot e_z + R - m [\dot\omega, \rho] - m [\omega, [\omega, \rho]] -2m [\omega, \dot\rho] - \mu\dot\rho.
\end{align}
Here $R$ is the force of reaction directed along the normal vector to the surface (at a given point), $\rho$ is the radius-vector of the mass point in the rotating frame,
$$
\rho = \xi e_\xi + \eta e_\eta + \zeta e_\zeta, \quad \dot \rho = \dot\xi e_\xi + \dot\eta e_\eta + \dot\zeta e_\zeta, \quad \ddot \rho = \ddot\xi e_\xi + \ddot\eta e_\eta + \ddot\zeta e_\zeta.
$$
$\omega$ is the vector of angular velocity for rotation of $O\xi\eta\zeta$ w.r.t. $Oxyz$
$$
\omega = \omega_\xi e_\xi + \omega_\eta e_\eta + \omega_\zeta e_\zeta, \quad \dot\omega = \dot\omega_\xi e_\xi + \dot\omega_\eta e_\eta + \dot\omega_\zeta e_\zeta.
$$
$\mu > 0$ is the coefficient of the viscous friction in the system.
The phase space of the system is $T\mathbb{S}^2$. Let us consider the horizontal plane
$$
z = 0.
$$
In the moving frame, this plane is described by the equation
\begin{align}
\label{e11}
    f(t, \xi, \eta, \zeta) = 0,
\end{align}
where
$$
f = (e_z, e_\xi) \xi + (e_z, e_\eta) \eta + (e_z, e_\zeta) \zeta.
$$
For any $t$, this plane intersects the surface $s = 0$ by some curve homotopic to a circle. The form of this curve smoothly depends on $t$. Let us also consider the following equations
\begin{align}
\label{e12}
    \dot s = \frac{\partial g}{\partial \xi} \dot \xi + \frac{\partial g}{\partial \eta} \dot \eta + \frac{\partial g}{\partial \zeta} \dot \zeta = 0,
\end{align}
and
\begin{align}
\label{e13}
\begin{split}
    \dot f = (e_z, [\omega, e_\xi])\xi + (e_z, [\omega, e_\eta])\eta &+ (e_z, [\omega, e_\zeta])\zeta +\\
    &+(e_z, e_\xi) \dot \xi + (e_z, e_\eta) \dot \eta + (e_z, e_\zeta) \dot \zeta = 0.
\end{split}
\end{align}
If, for given $\xi$, $\eta$ and $\zeta$, tangent vector $(\dot \xi, \dot \eta, \dot \zeta)$ satisfies equations (\ref{e12}) and (\ref{e13}), then, in the first order in $t$, the solution, starting in the considered point with the considered velocity, stays on the surface $f = 0$.

Below we show that if $\mu > 0$ is large and $\omega$ and $\dot \omega$ are small enough, then in the second order in $t$ every solution which is tangent to $f = s = 0$ leaves the region where $f \geqslant 0$. To be more precise:
\begin{lemma}
Let us consider the kinetic energy of the system:
\begin{align}
    T = \frac{m\dot\rho^2}{2}.
\end{align}
Suppose that $\omega$ and $\dot\omega$ are bounded and $\mu > 0$. Then there exists $c$ such that for any solution $\rho(t)$ of (\ref{e10}) satisfying $m\dot\rho_0^2 = m\dot\rho^2(t_0) = 2c$, we have $\dot T < 0$ at $t = t_0$.
\end{lemma}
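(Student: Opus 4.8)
The plan is to reproduce, in the rotating frame, the estimate already used for the spherical pendulum in (\ref{ee3}), exploiting that the dissipative term is quadratic in $|\dot\rho|$ while every other force in (\ref{e10}) contributes at most linearly in $|\dot\rho|$ to $\dot T$. So the argument should be essentially the same as the one giving (\ref{ee3}), with the extra inertial terms controlled by compactness of the surface.

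First I would compute $\dot T = (m\ddot\rho,\dot\rho)$ and substitute (\ref{e10}). Two of the six terms on the right drop out. The Coriolis term is gyroscopic, so $([\omega,\dot\rho],\dot\rho)=0$; and the reaction term vanishes, $(R,\dot\rho)=0$, because in the rotating frame the surface $s=0$ is time-independent, so the relative velocity $\dot\rho=\dot\xi e_\xi+\dot\eta e_\eta+\dot\zeta e_\zeta$ is tangent to it — this is precisely equation (\ref{e12}) — whereas $R$ is directed along the surface normal by hypothesis. What remains is
\begin{align*}
\dot T = -\mu|\dot\rho|^2 - mg(e_z,\dot\rho) - m([\dot\omega,\rho],\dot\rho) - m([\omega,[\omega,\rho]],\dot\rho).
\end{align*}
Next I would bound the last three terms by Cauchy--Schwarz. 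Since the surface is closed, hence bounded, one has $|\rho|\leqslant\rho_*$ along any admissible motion for some constant $\rho_*$; writing $\Omega=\sup_t|\omega(t)|$ and $\Omega'=\sup_t|\dot\omega(t)|$, finite by assumption, this gives
\begin{align*}
\dot T \leqslant -\mu|\dot\rho|^2 + \bigl(mg + m\rho_*\Omega' + m\rho_*\Omega^2\bigr)|\dot\rho| = |\dot\rho|\bigl(C-\mu|\dot\rho|\bigr),
\end{align*}
with $C = mg+m\rho_*(\Omega'+\Omega^2)$ independent of the energy level. On the set $\{m\dot\rho^2=2c\}$ one has $|\dot\rho|=\sqrt{2c/m}$, so $\dot T<0$ as soon as $\sqrt{2c/m}>C/\mu$; hence any $c>mC^2/(2\mu^2)$ works.

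I do not expect a genuine obstacle. The only step that needs care is the identity $(R,\dot\rho)=0$: it is exactly the passage to the rotating frame that renders the constraint stationary and the reaction orthogonal to the relative velocity, and without this observation the reaction would have to be estimated rather than discarded. Everything else is the same ``quadratic dissipation beats linear forcing'' mechanism as in (\ref{ee3}), with the Euler and centrifugal forces $-m[\dot\omega,\rho]$ and $-m[\omega,[\omega,\rho]]$ simply absorbed into the linear-in-speed part via boundedness of $\rho$, $\omega$ and $\dot\omega$.
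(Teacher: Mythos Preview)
Your proof is correct and follows essentially the same approach as the paper: compute $\dot T=(m\ddot\rho,\dot\rho)$, drop the reaction and Coriolis contributions by orthogonality, and observe that the dissipative term $-\mu|\dot\rho|^2$ dominates the remaining gravity, Euler, and centrifugal terms for large $|\dot\rho|$. Your version is simply more explicit---you spell out why $(R,\dot\rho)=0$ in the rotating frame and give a quantitative threshold $c>mC^2/(2\mu^2)$, whereas the paper just asserts that a sufficiently large $c$ works.
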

\begin{proof}
Indeed,
$$
\frac{d}{dt}T = m(\ddot\rho, \dot \rho).
$$
Therefore, taking into account that $R\,\bot\,\dot \rho$, we obtain
$$
\frac{d}{dt}T =  -(mg\cdot e_z, \dot \rho) -m([\dot\omega, \rho], \dot \rho) - m ([\omega, [\omega, \rho]], \dot \rho) - \mu(\dot\rho, \dot\rho).
$$
Finally, if $c$ is large enough, then for any $\dot\rho$ such that $T = c$, we have $\dot T < 0$. It proves the lemma.
\end{proof}
Note, that if $\mu \to \infty$, then $c$ can be chosen arbitrarily small.
\begin{lemma}
There exist $b$ and $c$ such that for any $\xi$, $\eta$, $\zeta$, $\dot\xi$, $\dot \eta$ and $\dot\zeta$ satisfying (\ref{e9}), (\ref{e11})--(\ref{e13}), we have
$$
\ddot f < 0.
$$
provided $|\omega| < b$, $|\dot\omega| < b$ and $m\dot\rho^2 \leqslant 2c$.
\end{lemma}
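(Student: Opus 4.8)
The plan is to exploit that $f$ was defined so that $f=0$ is precisely the plane $z=0$: along any motion $f$ equals the height $z$ of the mass point in the fixed frame, so $\ddot f=\ddot z$. Rewriting (\ref{e10}) in the fixed frame, the pseudo-forces $-m[\dot\omega,\rho]$, $-m[\omega,[\omega,\rho]]$ and $-2m[\omega,\dot\rho]$ recombine with $m\ddot\rho$ into $m$ times the absolute acceleration, and projecting onto $e_z$ gives
\[
m\ddot f=-mg+(R,e_z)-\mu(\dot\rho,e_z),
\]
where $\dot\rho$ is still the velocity relative to the surface. (Equivalently one stays in the rotating frame and verifies this identity by direct differentiation, using $\dot e_\xi=[\omega,e_\xi]$ and the pairwise cancellation of the three inertial contributions.) On the constraint set, (\ref{e13}) gives $(\dot\rho,e_z)=-(e_z,[\omega,\rho])$, so $|\mu(\dot\rho,e_z)|\le\mu|\omega|\max|\rho|$.

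Next I would eliminate the reaction. In the rotating coordinates $s$ does not depend on $t$, and along a constrained solution $s\equiv0$, hence $\ddot s=0$. Writing $R=\kappa\nabla s$, differentiating (\ref{e12}) once more, substituting (\ref{e10}), and using $(\nabla s,\dot\rho)=\dot s=0$ to drop the friction term, one obtains
\begin{align*}
\kappa=\frac{m}{|\nabla s|^2}\Big[&g(\nabla s,e_z)-(H_s\dot\rho,\dot\rho)\\
&+(\nabla s,[\dot\omega,\rho])+(\nabla s,[\omega,[\omega,\rho]])+2(\nabla s,[\omega,\dot\rho])\Big],
\end{align*}
with $H_s$ the Hessian of $s$ in the variables $\xi,\eta,\zeta$. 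Substituting $(R,e_z)=\kappa(\nabla s,e_z)$, the term $g(\nabla s,e_z)$ produces exactly $-mg+mg(\nabla s,e_z)^2/|\nabla s|^2$, while every remaining contribution carries a factor $(H_s\dot\rho,\dot\rho)$, $|\dot\omega|$, $|\omega|^2$, $|\omega||\dot\rho|$ or $\mu|\omega|$. Using $m\dot\rho^2\le2c$, $|\omega|<b$, $|\dot\omega|<b$, and the bounds $|\nabla s|\ge\nu>0$, $\|H_s\|\le M$ on the compact surface, this gives
\[
m\ddot f\le-mg\Big(1-\frac{(\nabla s,e_z)^2}{|\nabla s|^2}\Big)+C(c+b+\mu b)
\]
for a constant $C$ depending only on the surface, $m$ and $g$.

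The heart of the matter is the first term. By the Cauchy--Schwarz inequality $1-(\nabla s,e_z)^2/|\nabla s|^2\ge0$, with equality only when $\nabla s\parallel e_z$, i.e. when the tangent plane to the surface at the point is horizontal. But the point lies on the plane $z=0$, which passes through the interior point $O$; a horizontal tangent plane there would be a supporting plane of the convex body through one of its interior points, which is impossible. Since the surface is compact and the rotation at any instant lies in the compact group $SO(3)$, this non-degeneracy is uniform: there is $\delta>0$, depending only on the surface, with $1-(\nabla s,e_z)^2/|\nabla s|^2\ge\delta$ on the whole relevant set. Hence $m\ddot f\le-mg\delta+C(c+b+\mu b)$, and I would conclude by first choosing $c$ small enough that $Cc<mg\delta/4$, and then $b$ small enough (depending on $c$, $C$ and $\mu$) that $C(b+\mu b)<mg\delta/4$; for these $b$ and $c$ one gets $\ddot f\le-g\delta/2<0$.

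The main obstacle is precisely this uniform transversality bound $1-(\nabla s,e_z)^2/|\nabla s|^2\ge\delta>0$; this is where the convexity of the surface and the assumption that $O$ lies in its interior enter, and it plays the role that the elementary computation $(m\ddot\rho,e_z)=-mg<0$ played in the non-rotating magnetic case. Everything else is the routine elimination of the reaction together with the two-step smallness choice. One point to treat with some care is that for $\omega\ne0$ the admissible-velocity plane cut out by (\ref{e12})--(\ref{e13}) is itself $\omega$-dependent; however it tends, uniformly on the compact surface, to its $\omega=0$ limit, so the estimate above is stable under small $\omega$.
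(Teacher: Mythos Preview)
Your argument is correct and follows the same route as the paper: eliminate the reaction from the constraint $\ddot s=0$, identify the dominant part of $\ddot f$ as $b,c\to0$ as $-g\bigl(1-(\nabla s,e_z)^2/|\nabla s|^2\bigr)$, and use strict convexity together with $O$ lying in the interior to get a uniform gap $\delta>0$. Your coordinate-free packaging via the observation $f=z$, $m\ddot f=-mg+(R,e_z)-\mu(\dot\rho,e_z)$ is somewhat cleaner than the paper's version, which carries out the same computation in a local chart $\zeta=\zeta(\xi,\eta)$ and arrives at the identical dominant term.
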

\begin{proof}
First, suppose that in equation (\ref{e9}) variables $\xi$ and $\eta$ can be locally considered as independent, i.e. we can rewrite (\ref{e9}) locally as $$\zeta - \zeta(\xi, \eta) = 0.$$
Where $\zeta(\xi, \eta)$ is a smooth function. From (\ref{e10}) we have
\begin{align}
\label{e15}
    \begin{split}
        & \ddot \xi = -mg(e_z,e_\xi) + \Phi_\xi + \tilde R\cdot \left( -\zeta_\xi \right),\\
        & \ddot \eta = -mg(e_z,e_\eta) + \Phi_\eta + \tilde R \cdot \left( -\zeta_\eta \right),\\
        & \ddot \zeta = -mg(e_z,e_\zeta) + \Phi_\zeta + \tilde R.
    \end{split}
\end{align}
Here $\Phi$ is the vector of all forces of inertia and friction acting on the point in the rotating frame and $R = \tilde R (-\zeta_\xi e_\xi - \zeta_\eta e_\eta + e_\zeta)$. It is not hard to see that $\| \Phi \| \to 0$ as $b \to 0$.
We can find $\tilde R$ from the above equations. From the third equation in (\ref{e15})
\begin{align}
    \begin{split}
        \tilde R = \zeta_\xi \ddot\xi + \zeta_\eta\ddot\eta + \zeta_{\xi\xi}\dot\xi^2 + 2 \zeta_{\xi\eta}\dot\xi\dot\eta + \zeta_{\eta\eta}\dot\eta^2 - \Phi_\zeta + mg(e_z,e_\zeta)
    \end{split}
\end{align}
and then substituting $\ddot\xi$ and $\ddot\eta$ from (\ref{e15}), we obtain
\begin{align}
\label{e17}
    \begin{split}
        \tilde R = \frac{1}{1 + \zeta_\xi^2 + \zeta_\eta^2} &\Big( \zeta_\xi [-mg (e_z, e_\xi) + \Phi_\xi] + \zeta_\eta [-mg (e_z, e_\eta) + \Phi_\eta] +\\
        &\zeta_{\xi\xi}\dot\xi^2 + 2 \zeta_{\xi\eta}\dot\xi\dot\eta + \zeta_{\eta\eta}\dot\eta^2 - \Phi_\zeta + mg(e_z,e_\zeta)\Big).
    \end{split}
\end{align}
For $\ddot f$ we have
\begin{align}
    \begin{split}
        \ddot f = &(e_z, e_\xi)\ddot\xi + (e_z,e_\eta)\ddot\eta + (e_z, e_\zeta)\ddot\zeta +\\
        &2(e_z, [\omega, e_\xi])\dot\xi + 2(e_z, [\omega,e_\eta])\dot\eta + 2(e_z, [\omega, e_\zeta]) \dot\zeta +\\
        &(e_z, [\dot\omega, e_\xi])\xi + (e_z, [\dot\omega,e_\eta])\eta + (e_z, [\dot\omega, e_\zeta]) \zeta +\\
        &(e_z, [\omega, [\omega, e_\xi]])\xi + (e_z, [\omega,[\omega,e_\eta]])\eta + (e_z, [\omega, [\omega, e_\zeta]]) \zeta
    \end{split}
\end{align}
Values of $|\dot\rho|$, $|\omega|$ and $|\dot\omega|$ can be made arbitrarily small by choosing $b$ and $c$. Hence, the sign of $\ddot f$ is governed by the following term
$$
-mg - \zeta_\xi(e_z,e_\xi) \tilde R - \zeta_\eta(e_z,e_\eta) \tilde R + (e_z, e_\zeta) \tilde R
$$
From geometrical considerations, vectors $(e_z,e_\zeta)e_\zeta + (e_z, e_\eta) e_\eta  + (e_z, e_\zeta) e_\zeta$ and $-\zeta_\xi e_\xi - \zeta_\eta e_\eta + e_\zeta$ cannot be parallel. Therefore, from (\ref{e17}), $|\tilde R| < mg/(1+\zeta_\xi^2 + \zeta_\eta^2)^{1/2}$ for small $b$ and $c$. Finally,
$$
-mg - \zeta_\xi(e_z,e_\xi) \tilde R - \zeta_\eta(e_z,e_\eta) \tilde R + (e_z, e_\zeta) \tilde R < 0.
$$
\end{proof}
From Lemmas 3.1 and 3.2 applying the Wa{\.z}ewski principle, we have
\begin{theorem}
There exist $b$ and $\mu$ such that for any $|\omega(t)| < b$ and $|\dot \omega(t)|<b$ for all $t$ system (\ref{e9}) has a solution $\rho(t)$ such that along this solution $f > 0$ for all $t$.
\end{theorem}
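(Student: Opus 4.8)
\emph{Proof proposal.}
The plan is to turn
\[
N_c=\{(t,\rho,\dot\rho):f(t,\rho)\geqslant 0,\ T(\dot\rho)\leqslant c\}\subset\mathbb{R}\times T\mathbb{S}^2
\]
into a Wa{\.z}ewski set for equation (\ref{e10}), exactly as in the proof of Theorem~2.2, and to deduce the statement from the fact that its exit set is not a retract of it. First I would fix the constants. Lemma~3.2 supplies a pair $(b,c)$ for which every solution tangent to $f=0$ along (\ref{e12})--(\ref{e13}) satisfies $\ddot f<0$; I may shrink $b$ and $c$ freely afterwards since this only shrinks the regions involved, and I will use this once more below. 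With $b,c$ fixed, Lemma~3.1 and the remark following it provide a $\mu$ --- any $\mu$ large enough --- for which $\dot T<0$ on $\{T=c\}$; these are the $b$ and $\mu$ of the statement. As in Section~2, the inequality $\dot T<0$ on $\{T=c\}$, together with compactness of the surface and the bounds on $\omega,\dot\omega$, makes every solution of (\ref{e10}) complete.

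Next I would analyse the boundary of $N_c$, which splits into the face $\{T=c,\ f>0\}$, the face $\{f=0,\ T\leqslant c\}$, and their common corner. On the first face Lemma~3.1 gives $\dot T<0$, so solutions cross it into $N_c$ and it carries no egress points. On the second face a solution through a point with $f=0$ has either $\dot f<0$ (it leaves $N_c$ transversally --- a strictly egress point), or $\dot f>0$ (it was outside $N_c$ an instant earlier --- an entrance point), or $\dot f=0$: in this last case (\ref{e12}) holds automatically because the orbit lies on the surface $s=0$ and (\ref{e13}) is precisely $\dot f=0$, so Lemma~3.2 applies and gives $\ddot f<0$, i.e.\ $f$ has a strict local maximum $0$ along the orbit and the tangency is external --- again not an egress point. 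The corner is treated in the same way (or $N_c$ is smoothed there). Hence every egress point of $N_c$ is a strictly egress point, and the closed exit set is $N_c^{++}=\{f=0,\ T\leqslant c,\ \dot f\leqslant 0\}$.

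Now the topology, on a fixed time slice $t=t_0$. The top cap $\{s=0,\ f(t_0,\cdot)\geqslant 0\}$ of the rotated surface is a $2$-disk, and over each of its points $\{T\leqslant c\}$ is a $2$-disk, so $N_c\cap\{t=t_0\}$ is contractible. Over the equator $\{s=0,\ f(t_0,\cdot)=0\}$ --- which is a genuine circle by the standing assumption on the section, equivalently the surface normal is nowhere vertical along it --- the fibre of $N_c^{++}$ is $\{\dot\rho\in T_\rho\{s=0\}:T(\dot\rho)\leqslant c,\ \dot f\leqslant 0\}$; since on the tangent plane $\dot f$ is a nonconstant affine function of $\dot\rho$ (its linear part $\dot\rho\mapsto(e_z,\dot\rho)$ is nonzero because the normal is not vertical) with constant part $(e_z,[\omega,\rho])=O(b)$, this fibre is a nonempty truncated half-disk --- here I shrink $b$ once more so the $O(b)$ term cannot engulf the whole disk --- hence contractible, and $N_c^{++}\cap\{t=t_0\}\simeq S^1$. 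As in the proof of Theorem~2.2, take a $2$-disk $D$ in the slice $t=0$ with $\partial D\subset N_c^{++}$ representing a generator of $\pi_1(N_c^{++})$ and $D\cap\partial N_c=\partial D$ --- concretely the top cap equipped with a small velocity field that is tapered to zero at the top and, near the equator, chosen so that $\dot f<0$. If every solution from $D$ left $N_c$, then, all egress points being strictly egress, the exit-time and exit-point maps would be continuous on $D$ and would yield a retraction of $D$ onto $\partial D$, a contradiction; so some solution $\rho(t)$ from $D$ stays in $N_c$ for all $t\geqslant 0$.

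For that solution $f(t,\rho(t))\geqslant 0$ for $t\geqslant 0$, and $f$ cannot vanish at any $t_1>0$: at such a point $\dot f\leqslant 0$ would force an immediate exit (by the second paragraph), while $\dot f>0$ would put the orbit outside $N_c$ just before $t_1$ --- so in fact $f>0$ for all $t>0$ (and the orbit must start in the interior of $D$, so $f(0)>0$ as well). To obtain $f>0$ for all $t\in\mathbb{R}$ I would repeat the argument on the slices $t=-n$, getting solutions that stay in $N_c$ on $[-n,\infty)$, and pass to a limit as $n\to\infty$ using compactness of $N_c$ (bounded velocities, compact surface); the limit stays in $N_c$ for all $t$, and the same sign argument gives $f>0$ throughout. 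The step I expect to be the main obstacle is the precise identification of the exit set --- showing no orbit is internally tangent to $N_c$, which is exactly Lemma~3.2 on the face $f=0$ and Lemma~3.1 on the face $T=c$, together with the corner --- and, inseparably, confirming that $N_c^{++}$ has the homotopy type of a circle, which is what forces the modest bookkeeping on the constants (the section being a true circle, and $b$ small relative to $c$ so that the velocity fibres over the equator stay nonempty).
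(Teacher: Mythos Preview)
Your proposal is correct and follows essentially the same route as the paper: fix $(b,c)$ via Lemma~3.2, then $\mu$ via Lemma~3.1, define the same Wa{\.z}ewski block $N_c=\{f\geqslant 0,\ T\leqslant c\}$ with exit set $N_c^{++}=\{f=0,\ \dot f\leqslant 0,\ T\leqslant c\}$, embed a $2$-disk $D$ in the slice $t=0$ with $\partial D\subset N_c^{++}$, and derive a contradiction from a retraction of $D$ onto $\partial D$. Your write-up is in fact more careful than the paper's in several places --- you justify explicitly that the velocity fibres of $N_c^{++}$ over the equator are nonempty (forcing $b$ small relative to $c$), you treat the corner and the external-tangency case, and you add the diagonal/limit argument to upgrade ``$f>0$ for $t\geqslant 0$'' to all $t\in\mathbb{R}$, a point the paper's proof does not address.
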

\begin{proof}
First, let $b$ and $c$ from Lemma 3.2 be chosen in such a way that for any point at $f = 0 \cap s = 0$, the set $T \leqslant c \cap \dot f \leqslant 0$ of the tangent space is not empty and is homeomorphic to a disk. It is possible to choose such $b$ and $c$ since set $\dot f \leqslant 0$ can be arbitrarily close to a half space of the tangent space when $b$ is small. Indeed, for $\omega = 0$, equation (\ref{e13}) has the form
\begin{align*}
\begin{split}
    \dot f = (e_z, e_\xi) \dot \xi + (e_z, e_\eta) \dot \eta + (e_z, e_\zeta) \dot \zeta.
\end{split}
\end{align*}
And $\dot f \leqslant 0$ defines a half space. Second, let us choose $\mu > 0$ such that the statement of Lemma 3.1 holds for the chosen $c$. Third, let us consider the following subset of the extended phase space:
\begin{align}
    N_c = \{ t, \rho, \dot\rho \colon f(t, \rho) \geqslant 0,\, s(\rho) = 0,\, T(\rho, \dot \rho) \leqslant c \}.
\end{align}
Any solution leaving this set can leave it only through the points of the following set
\begin{align}
    N_c^{++} = \{ t, \rho, \dot\rho \colon f = 0,\, s = 0,\, \dot f \leqslant 0,\, T \leqslant c \}.
\end{align}
Indeed, the solution cannot leave $N_c$ through the points where $T = c$ (unless we have $f = 0$ and the solution leaves $N_c$ through this part of the boundary). If $f = 0$ and $\dot f > 0$, the solution cannot leave the set either.

Let us consider a subset $D \subset N_c $ with the following properties:
\begin{enumerate}
    \item $D$ is homeomorphic to a disk.
    \item $\partial D \subset N_c^{++}$ and $D \cap \partial N_c = \partial D$.
    \item $D \subset \{ t, \rho, \dot \rho \colon t = 0 \}$.
\end{enumerate}
Let us show that there exists a point at $D$ such that the solution starting at this point always remains in $N_c \setminus \partial N_c$. Again, suppose the contrary. Then we can construct a retract between $D$ and its boundary (similarly to the case of pendulum).
\end{proof}

The key observation in the proof is similar to one for the pendulum. We show that any solution, tangent to the boundary of some region in the configuration space, locally leaves this region. In the case of pendulum this region is the positions of the rod where $(\rho, e_z) \geqslant 0$ (fixed region). For the surface this region is moving: $f(t, \rho) \geqslant$. In the both cases, we show that the gravity force is stronger than the vertical components of other forces, hence, the solutions leaves our regions.

Similarly, using results from \cite{srzednicki1994periodic,srzednicki2005fixed}, it is possible to show that the following also holds.
\begin{theorem}
There exist $b$ and $\mu$ such that for any $\tau$-periodic rotation of the surface satisfying $|\omega(t)| < b$ and $|\dot \omega(t)|<b$ for all $t$ system (\ref{e9}) has a $\tau$-periodic solution $\rho(t)$ such that along this solution $f > 0$ for all $t$.
\end{theorem}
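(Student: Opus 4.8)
The plan is to repeat the construction used in the proof of Theorem~3.3, replacing the classical Ważewski principle by its periodic refinement from \cite{srzednicki1994periodic,srzednicki2005fixed}, exactly as Theorem~2.1 was obtained from the non-periodic pendulum argument. First I would fix the constants as there: since the rotation is $\tau$-periodic, $\omega(t)$ and $\dot\omega(t)$ are bounded, so Lemma~3.1 supplies a $c>0$ with $\dot T<0$ on $\{T=c\}$; shrinking $c$ and choosing $b$ as in Lemma~3.2 we also get $\ddot f<0$ at every tangency to $\{f=s=0\}$ with $T\leqslant c$, and, as in the proof of Theorem~3.3, $b$ may be taken so small that in each tangent plane the set $\{\dot f\leqslant 0,\ T\leqslant c\}$ is nonempty and homeomorphic to a disk. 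Because $\omega,\dot\omega$ — and therefore the coefficients of (\ref{e13}) and the inertial force $\Phi$ in (\ref{e15}) — are $\tau$-periodic, all the sets constructed below are $\tau$-periodic in $t$, and boundedness of $|\dot\rho|$ on $\{T\leqslant c\}$ together with compactness of $\mathbb{S}^2$ and periodicity of the forcing guarantees that every solution of (\ref{e10}) is defined for all $t\in\mathbb{R}$.

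Next I would take the same set
\[
N_c=\{\,t,\rho,\dot\rho\colon f(t,\rho)\geqslant 0,\ s(\rho)=0,\ T(\rho,\dot\rho)\leqslant c\,\}
\]
in the extended phase space $\mathbb{R}\times T\mathbb{S}^2$, whose boundary splits into the part $\{f=0\}$ and the part $\{T=c\}$. By Lemma~3.1 no solution leaves $N_c$ through $\{T=c,\ f>0\}$, and by Lemma~3.2 every solution tangent to $\{f=s=0\}$ with $T\leqslant c$ leaves $\{f\geqslant 0\}$, so there are no internally tangent trajectories and every egress point is strictly egress. Hence the strictly egress set is exactly
\[
N_c^{++}=\{\,t,\rho,\dot\rho\colon f=0,\ s=0,\ \dot f\leqslant 0,\ T\leqslant c\,\},
\]
which is closed and $\tau$-periodic, so the hypotheses of the theorem of \cite{srzednicki1994periodic,srzednicki2005fixed} are met once the Euler characteristics are computed.

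For the topology: the curve $\{f(t,\cdot)=s=0\}$ divides the strictly convex surface $\{s=0\}\cong\mathbb{S}^2$ into two disks, one of which is $\{s=0,\ f(t,\cdot)\geqslant 0\}$; over it $N_c$ is fibred by the disk $\{T\leqslant c\}$, so $N_c$ is homotopy equivalent to a point and $\chi(N_c)=1$. Over the circle $\{f(t,\cdot)=s=0\}$ the set $N_c^{++}$ is fibred by the disk $\{\dot f\leqslant 0,\ T\leqslant c\}$ arranged above, so $N_c^{++}$ is homotopy equivalent to a circle and $\chi(N_c^{++})=0$. Thus $\chi(N_c)-\chi(N_c^{++})=1\neq 0$, and the cited result produces a $\tau$-periodic solution contained in $N_c\setminus\partial N_c$; along it $f(t,\rho(t))>0$ for all $t$, which is the claim. (Equivalently, one may run the retraction argument of Theorem~3.3 at the level of the period map.)

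The main obstacle, as in the proof of Theorem~3.3, is the bookkeeping around the ``corner'' $\{f=0\}\cap\{T=c\}$ and, above all, the verification that the fibre $\{\dot f\leqslant 0,\ T\leqslant c\}$ of $N_c^{++}$ is really a disk — which forces $N_c^{++}\simeq\mathbb{S}^1$ — rather than a set of a different homotopy type; this is exactly where smallness of $b$ enters, since only then is $\{\dot f\leqslant 0\}$ a small perturbation of a half-plane of $T_\rho\mathbb{S}^2$. One must also check that the estimate $\ddot f<0$ of Lemma~3.2 is uniform in $t$ and over the compact set $\{T\leqslant c\}$, so that $N_c^{++}$ is closed and the exit map used in the Ważewski-type argument is continuous; given Lemmas~3.1 and~3.2 this is routine.
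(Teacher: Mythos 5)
Your proposal is correct and follows exactly the route the paper sketches: reuse $N_c$ and $N_c^{++}$ from Theorem~3.3 and replace the Ważewski retraction argument by the Srzednicki periodic-solution theorem, as was done in passing from Theorem~2.2 to Theorem~2.1. The paper only gestures at this (noting the extra wrinkle that the time-slices of $N_c$ now drift, as in the deformed-cylinder example), whereas you explicitly supply the Euler-characteristic computation $\chi(N_c)-\chi(N_c^{++})=1-0=1$; this is the intended argument, merely made precise.
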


The general method of the proof is the same as for the pendulum. The only difference here is that for the rotating surface the sections of $N_c$ by the planes $t = t_0$ for various $t_0$ do not coincide. Schematically this situation is illustrated in Fig. 4. However, in this case it is also possible to prove the existence of a periodic solution. The situation is similar to the following case. Let us again have system
\begin{align*}
\begin{split}
    & \dot x = v(x, y, t),\\
    & \dot y = w(x, y, t).
\end{split}
\end{align*}
And suppose that $N = \{ x, y, t \colon (x - \xi(t))^2 + (y - \eta(t))^2 \leqslant 1 \}$ where $\xi(t)$ and $\eta(t)$ are $\tau$-periodic functions (Fig. 4). Then there exists a $\tau$-periodic solution inside $N$ provided all trajectories in the extended phase space are transverse to the boundary of the deformed cylinder and all these solutions leave $N$.

\section{Conclusion}
We have considered two mechanical systems with gyroscopic forces and friction and shown that there exists a solution that always stays in some region. Moreover, this solution can be periodic provided the forces acting on the system are periodic.

The presented approach can be easily applied to various mechanical systems. For instance, one can consider the system of a point moving on a rotating surface without the assumption of the convexity of the surface. It is also possible to consider feedback control systems.

Note, that there exists a series of paper on the existence of periodic solutions in  Hamiltonian systems with magnetic or gyroscopic forces (indeed, frictionless) \cite{abbondandolo2014infinitely,taimanov1991nonselfintersecting,taimanov1992closed,kozlov1985calculus,bangert2010existence,ginzburg1996closed,ginzburg1999periodic,schneider2008closed}. In this regard, it is important to mention on the development a many-valued version of Morse theory \cite{novikov1981variational,novikov1981periodic,novikov1982hamiltonian}. At the same time, for many real-life mechanical systems it would be too optimistic to consider them as absolutely frictionless systems. However, the root of these problems is often purely mathematical and stems from the same problems for geodesics on a closed Riemannian manifold. In particular, the systems are not supposed to be non-autonomous. Results for non-autonomous systems can be found in \cite{furi1991forced,benci1986periodic}.

The case of non-autonomous systems with gyroscopic forces is far less developed. In particular, as far as we know, the are no results on the existence of forced oscillations for systems with gyroscopic forces and friction defined on a manifold with a boundary.

\section{Acknowledgement}
The research was funded by a grant from the Russian Science Foundation (Project No. 19-71-30012).












\section*{References}

\bibliographystyle{model1-num-names}
\bibliography{sample}







\end{document}